\documentclass[a4paper,11pt,makeidx]{amsart} 
\oddsidemargin  0.4 cm  
\evensidemargin 0.4 cm   
\textwidth     15.16 cm 
\headsep        0.8 cm   
\tracingstats=1         
\usepackage{amscd}             
\usepackage{xypic}  
\usepackage{amssymb}
\usepackage{amsthm}
\usepackage{epsf} 
\usepackage[T1]{fontenc}            
\makeindex
\newtheorem{thm}{Theorem}[section]   

\newtheorem{exa}[thm]{Example}
\newtheorem{lemma}[thm]{Lemma}
\newtheorem{prop}[thm]{Proposition}
\newtheorem{example}[thm]{Example}
\newtheorem{defn}[thm]{Definition}

\newtheorem{rem}[thm]{Remark}

\def\i{^{-1}}

\def\ker{\operatorname{ker}}

\def\max{\operatorname{max}}

\def\c1{\operatorname{c_1}}
\def\c2{\operatorname{c_2}}

\def\PP{{\mathbf P}}

\def\G{{\mathcal G}}

\def\O{{\mathcal O}}

\def\E{{\mathcal E}}

\def\F{{\mathcal F}}

\def\V{{\mathcal V}}
\def\W{{\mathcal W}}
\def\x{\times}                   

\def\+{\oplus}                   
\def\*{\otimes}                  
\def\khpil{\rightarrow}

\def\Hom{\operatorname{Hom}}

\def\ker{\operatorname{Ker}}

\def\Fq{\mathbb{F}_{q}}
\def\Kx{K_{X}}
\def\Ox{O_{X}}
\hfuzz5pc
\def\sPrin{\underline{\mathrm{Prin}}}
\def\Prin{\mathrm{Prin}}
\def\sRat{\underline{\mathrm{Rat}}}
\def\Rat{\mathrm{Rat}}
\def\x{\mathbf{x}}
\def\y{\mathbf{y}}
\def\e{\mathbf{e}}

\def\strs{{\mathcal O}_{X}}
\def\cans{{\mathcal K}_{X}}

\begin{document}

\title{Decoding of scroll codes}

\author{George H.\ Hitching and Trygve Johnsen}

  
\address{Inst.\ of Algebraic Geometry\\
Leibniz University, Hannover\\ Welfengarten 1\\ 30167 Hannover\\ Germany\\
and\\
Dept. of Mathematics\\ 
University of Bergen\\ Johs.\ Brunsgt 12\\ N-5008 Bergen\\ Norway}

\email{hitching@math.uni-hannover.de and johnsen@math.uib.no}
\keywords{curves, scrolls, principal parts, linear codes, decoding, vector bundle extensions}
\subjclass{14J28 (14H51)}

\begin{abstract} We define and study a class of codes obtained from scrolls over curves of any genus over finite fields. These codes generalize Goppa codes in a natural way, and the orthogonal complements of these codes belong to the same class. We show how syndromes of error vectors correspond to certain vector bundle extensions, and how decoding is associated to finding destabilizing bundles. \end{abstract}

\maketitle

\section{Introduction} \label{intro}
In \cite{J}, the second author interpreted the syndrome space for traditional Goppa codes $C(D,G)$ for divisors $D,G$ on an algebraic curve $X$ as a projective space $\PP = \PP Ext(H,\Ox)^*$ of isomorphism classes of line bundle extensions. It is well known that an extension of line bundles 
\[ 0 \khpil \O_X \khpil W  \khpil H \khpil 0 \]
is classified by its cohomology class $\delta(W) \in H^{1}(X , \Hom(H, \Ox)) \cong Ext(H, \Ox)$, the null element of $Ext(H,\O_X)$ corresponding to the class of the trivial extension
\[ 0 \khpil \O_X \khpil \O_X \oplus H \khpil H \khpil 0.\]
The curve $X$ is embedded in $\PP$ in the following way: If $x \in X$, then $x$ corresponds to the class of an extension which is the kernel of the map
\[ Ext(H,\O_X) \khpil Ext(H,\O_X(x)). \]
In that case the middle term $E$ has a quotient bundle $\O_X(x)$. Moreover, in general, the class of an extension $W$ is in the kernel of the map
\[ Ext(H,\O_X) \khpil Ext(H,\O_X(A)), \]
for an effective divisor $A$ if and only if it is contained in $Span(A)$ (defined in a standard way) after embedding $X$ in $\PP$ as described. In that case $W$ has a quotient bundle $\Ox(A).$ In this way $\PP$ is stratified into secant strata of the embedded $X$ according to the $s$-invariant of the middle terms $W$ of the extensions appearing. The process of error location then corresponds, for given syndrome (= class of extension $W$), to find the right divisor $A'$ linearly equivalent to $A$ such that $\O_X(A)$ is a quotient bundle of the middle term $W$. It is interesting to observe that syndromes of errors which are designed-correctable (that is, the number of errors is at most $[\frac{d-1}{2}]$, where $d$ is the designed minimum distance $deg(D-G)$) are precisely the ones corresponding to unstable extensions. Recall that an extension is unstable if and only if $s(W)=2deg(A)-deg(W) < 0$ for a line bundle $A$ of minimal degree such that $\O_X(A)$ is a quotient bundle of the middle term $W$; equivalently, if $W$ contains a line subbundle of degree greater than $\frac{1}{2} deg(W)$. This viewpoint has been utilized and studied through a series of papers; see \cite{BC}, \cite{Co1}, \cite{Co2}.
\par
In this paper we will replace the divisor or line bundle $G$ on the curve $X$ with a locally free sheaf $\E$ (or vector bundle $E$) of arbitrary positive rank $r$ and apply a similar construction. This gives rise to a scroll $\PP E$ and various scroll codes, some of which have been studied in several papers. See \cite{Ha}, \cite{L}, and \cite{Na}. We show how syndromes and decoding can be interpreted in terms of vector bundle extensions for a particular class of such codes.
\par
Here is a summary of the article. Firstly, we recall some facts about scrolls and vector bundles which will be needed. In $\S$\ref{matrix}, we define ``SAGS codes'', a type of evaluation code which generalizes Goppa's SAG codes to scrolls. In particular, these have the property that their dual codes can again be interpreted as evaluation codes. In $\S$\ref{geomext}, we recall or prove some facts about the geometry of vector bundle extensions, and in $\S$\ref{geomcorr} we apply this to decoding and error correction on SAGS codes. In the final section, we make brief remarks about the applicability of these results to scroll codes which are not necessarily evaluation codes.
\par
An important tool is the use of bundle-valued principal parts to define the codes. We believe this makes transparent the connection between syndromes, bundles and geometry.\\
\\
\textbf{Acknowledgements:} The first author is supported by the Deutsche Forschungsgemeinschaft Schwerpunktprogramm ``Globale Methoden in der komplexen Geometrie''. He also thanks the University of Bergen for financial support and hospitality.

\section{Scrolls and vector bundles} \label{sins}

In this section we introduce the objects with which we will be working. Firstly, we fix some notation.\\
\par
We denote vector bundles over the curve $X$ with Roman letters $E$, $W$, $\Ox$, $\Kx$ etc.\ and their sheaves of sections with the corresponding script letters $\E$, $\W$, $\strs$, $\cans$ etc. If $V$ is a vector space, then $\PP V$ is the projective space of codimension one linear subspaces in $V$. Similarly, for a vector bundle $E \to X$ we define $\PP E$ to be the scroll whose fiber at $x \in X$ is the projective space of codimension one linear subspaces of $E|_x$. If $\Upsilon$ is a line bundle over some variety $Y$, we write $|\Upsilon|$ for the projective space $\PP H^{0}(Y, \Upsilon)$. If $|\Upsilon|$ is nonempty, we have a natural map $Y \dashrightarrow |\Upsilon|$.
\par
If $V$ is a vector space and $g \in V^*$ a nonzero linear form, we denote $\langle g \rangle$ the line in $V^*$ spanned by $g$, and also the point in $\PP V$ defined by $g$. We also use this notation for points of projectivized vector bundles.
\par
Any vector bundle $E \to X$ gives rise to a short exact sequence of $\strs$-modules
\[ 0 \to \E \to \sRat(E) \to \sPrin(E) \to 0 \]
where $\sRat(E)$ is the sheaf of rational sections of $E$ and $\sPrin(E)$ the sheaf of principal parts\footnote{Note that this is a different object from the ``principal part sheaf'' ${\mathcal P}^{k}_{X}(\E)$ considered by for example Laksov \cite{La}.} with values in $E$. Taking global sections, we obtain
\begin{equation} 0 \to H^{0}(X,E) \to \Rat(E) \to \Prin(E) \to H^{1}(X, E) \to 0. \label{cohomseq} \end{equation}
We denote $\overline{\alpha}$ the principal part of a global rational section $\alpha$ of $E$, and we write $[p]$ for the cohomology class of a principal part $p \in \Prin(E)$. See for example \cite{K} for further information.

\begin{defn} \label{ratscro}
Let $\E$ be a locally free sheaf of rank $r \ge 1$ on a curve $X$, chosen in such a way that the linear system 
$\Upsilon=\O_{\PP E}(1)$ on the corresponding $\PP^{r-1}$-bundle $\PP E$
over $X$ is very ample, and $h^1(\Upsilon)=0$. We map $\PP E$ into $\PP^{k-1}$ with the complete linear system $H^0 (\Upsilon)$. The image $T$ is by definition a smooth
scroll, and isomorphic to $\PP E$.
\par
In particular, if $X=\PP^1$, then $\E=\O_{\PP^1}(e_1) \+  \cdots \+\O_{\PP^1}(e_r)$, with $e_1 \geq  \ldots  \geq e_r
\geq 1$ and $\deg E=f=e_1+ \cdots +e_r \geq 2$. In this case $k=f+r,$ and
the image $T$ is by definition a rational normal 
scroll of type ${\bf e} = (e_1, \ldots ,e_r)$. 
\end{defn}
\begin{rem}
{\rm If the locally free sheaf $\E$ satisfies certain stability conditions, then the dimension $k$ is equal to $\deg(\E)+r(1-g)$ also for non-rational curves (twisting $E$ with a large enough multiple of the line bundle corresponding to a fiber $F$ if necessary). In general (see \cite{Na}, Proposition 2.1),
\[ h^0 \left( \O_{\PP E}(b_1) \otimes \O_{\PP E}(b_2 F) \right) = { b_1+r-1 \choose\ r-1 } (\mu b_1 +b_2 + 1-g )\]
in this case, where $\mu (\E)$ is the slope $\frac{\deg \E}{r}$ of $E$. Here we only study the case $b_1=1$. From the proof of this result it also follows that $h^1(X,E)=0$ under these stability conditions, and this gives $h^1(\PP E,\Upsilon)=0.$}
\end{rem}
In the next section, we will describe some codes which can be produced from these objects.
%
%
%
%

\section{Matrix description}
\label{matrix}
In this section we will define a generalization of the strongly algebraic geometric (SAG) codes considered in \cite{J}.

\subsection{Strongly algebraic geometric scroll codes}

Let $C$ be a code over a finite field $\Fq$ defined as follows. Start with a scroll $\PP E$ over a curve $X$, which is embedded in $\PP^{k-1}$ as described above. Suppose $\gamma$ is the number of $\Fq$-rational points on $X$; if $X = \PP^1$ then $\gamma = q+1$. Then we recall that $T$ contains
\begin{equation} \label{number}
n=\gamma(q^{r-1}+q^{r-2}+ \dots +q+1)
\end{equation}
points over $\Fq$. Choose $s$ of the $\gamma$ fibers of $\PP E$ over $X$, and in each fiber we pick at least $r$ points, such that these points span the fiber.
Altogether we have then chosen $v$ points $P_1, \dots, P_v,$ and
$sr \le v \le n$.
Let $\Upsilon$ be the linear system on $\PP E$ described above, and look at the map
$\phi \colon H^{0}(\PP E, \Upsilon) \khpil (\Fq)^v$ defined by $\phi (f) = (f(P_1), \dots, f(P_v)).$ The code $C$ is the image of $\phi$.
\par
Let $M$ be the divisor on $\PP E$ corresponding to the $s$ fibers spanned by the $P_i$, so $M$ is numerically (linearly if $X=\PP^1$) equivalent to $sF$ on $\PP E$, where 
$F$ is the class of a fiber. Recall that $\Upsilon$ is the bundle associated to the hyperplane
system $\O_{\PP E}(1)$. Look at the exact sequence of sheaves
$$ 0 \khpil \Upsilon(-M) \khpil \Upsilon \khpil \frac{\Upsilon}{\Upsilon(-M)} \khpil 0. $$
This induces an exact cohomology sequence
$$ 0 \khpil H^{0}(\Upsilon(-M)) \khpil H^{0}(\Upsilon) \khpil (\Fq)^{sr} \khpil H^1(\Upsilon(-M)) \khpil H^1(\Upsilon) \khpil 0. $$
In turn this induces a sequence of maps
$$ 0 \khpil H^{0}(\Upsilon(-M)) \khpil H^{0}(\Upsilon) \khpil (\Fq)^{sr} \khpil (\Fq)^{v}, $$
where each function on the union of the $s$ chosen fibers is evaluated at
the $v$ points by the last map of the sequence. We denote this map by $g$. Of course we claim no exactness
of the last sequence at $(\Fq)^{sr}$. We see from this that we can regard the linear code $C$ as the image of the quotient space $\frac{ H^0(\Upsilon)}{ H^0(\Upsilon(-M))}.$ In a special case considered by many authors one picks all $\Fq$-rational points in all fibers, so $s=\gamma$ and we pick $q^{r-1}+ \dots +q+1$ points in each fiber, and then $v=n=\gamma(q^{r-1}+ \dots +q+1).$
The last two sequences above are simplified if $H^0(\Upsilon(-M))=0$. For $X = \PP^1$ this happens if $s \ge e_1+1$, and such an $s$ can be chosen if 
$q \ge e_1$.

We now look at a special case:
\\
Here we pick instead exactly $r$ points in each of the $s$ fibers, and we also pick them such that they span the fibers. Write $D$ for the sum of the points of $X$ over which the divisor $M$ on $\PP E$ is supported. Clearly this is of the form $x_{1} + \cdots + x_s$ for distinct $\Fq$-rational points $x_{i} \in X$. For each $i=1 , \ldots , s$ we will denote the points in the fiber over $x_i$ by $P_{i,1}, \ldots , P_{i,r}$.
\par
Then $v=sr$, the map $g$ described above is an isomorphism of vector spaces, and we may identify the spaces $(\Fq)^{sr}$ and $(\Fq)^{v}$ of the last sequence, and regard the map $H^0(\Upsilon) \khpil (\Fq)^{sr}=(\Fq)^{v}$ of the long exact cohomology sequence as an evaluation map in the $v=sr$ points.
\par
Now the cohomology  $H^0(\Upsilon)$ and  $H^1(\Upsilon(-M))$ can be identified with cohomology spaces of bundles on $X$. We have
\[ H^0( \PP E , \Upsilon )= H^0(X,E) \]
and 
\begin{align*} H^1( \PP E ,\Upsilon(-M)) &= H^1(X, E \otimes \pi_*(\O_{\PP E}(-M))) \\ 
&= H^1(X, E \otimes \O(-D)) \\
&= H^0(X, \Kx(D) \otimes E^*)^{*} \hbox{ by Serre duality}\\
&= H^0( \PP E_{1} ,\Upsilon_1)^* \end{align*}
where $\Upsilon_1$ is a suitable line bundle on a scroll $\PP E_1$. Here $E_1= \Kx(D) \otimes E^*$, and $\Upsilon_1$ is $\O_{\PP E_1} (1)$ for this locally free sheaf of rank $r$ on $X$. We also get
$$H^1(T,\Upsilon)=H^1(X,\E)=H^0(X,K\otimes \E^*)^*=H^0(T_1,\Upsilon_{1}(-M))^{*};$$
here and in the sequel, we denote by $T_1$ the image of $\PP E_ 1$ by the linear system $\O(1)$.
For $X = \PP^1$, this becomes
\begin{multline*} H^1( \PP E , \Upsilon(-sF)) = H^1(\PP^{1}, \O (e_1-s) \oplus \O (e_2-s) \oplus \dots \oplus \O (e_r-s)) \\
 = H^0(\PP^1, \O (s-e_1-2) \oplus \O (s-e_2-2) \oplus \dots \oplus \O (s-e_r-2))^* =H^0(T_1,\Upsilon_1)^*. \end{multline*}
In fact $\Upsilon_1$ is $\O (1)$ on $\PP E_1$, where $\E_1=\O (s-e_d-2) \oplus \O (s-e_{d-1}-2) \oplus \dots \oplus \O (s-e_1-2)$ on $\PP^1$.
\par
The identifications of the $H^0$-spaces follows from \cite{Sc}, p.\ 110. Moreover, this and the the identification of the $H^1$-spaces follows from a straightforward generalization of Lemma V, 2.4 of \cite{H}: Clearly $H^i(\Upsilon(-M))_x = H^i(\Upsilon_x)=0$, for all $i>0$ and all points $x \in X$, since $\Upsilon)|_x=\O_{\PP^{d-1}}(1)$. Therefore $R^i(\pi_{*}\Upsilon(-M)) = 0$ for $i>0$. See \cite{H}, Chapter III, Ex.\ 11.8., and Chapter III, Ex.\ 8.4.

We note that $\PP E_1 \cong \PP E^*$, since $E_1 = E^* \otimes \Kx(D)$.
Hence the long exact cohomology sequence becomes:
\begin{multline} 0 \khpil H^0(\PP E,\Upsilon(-M)) \khpil H^0(\PP E,\Upsilon) \khpil (\Fq)^{sr} \\ \khpil H^0(\PP E^* , \Upsilon_1)^{*} \khpil  H^0(\PP E^* , (\Upsilon_1)-M)^{*} \khpil 0, \label{C} \end{multline}
which simplifies to
\begin{equation} 0 \khpil H^0(\PP E,\Upsilon) \khpil (\Fq)^{sr} \khpil H^0( \PP E^* , \Upsilon_1)^{*} \khpil 0 \label{CSAGS} \end{equation}
if $h^0( \PP E,\Upsilon(-M))=h^1( \PP E,\Upsilon)=0$. Dualizing, we get
\begin{multline} 0 \khpil H^0(\PP E^*,\Upsilon_1(-M)) \khpil H^0(\PP E^*,\Upsilon_1) \khpil (\Fq)^v \\ \khpil H^0( \PP E,\Upsilon)^{*} \khpil H^0( \PP E,\Upsilon(-M))^{*} \khpil 0 \label{Cdual} \end{multline}
which simplifies to
\begin{equation} 0 \khpil H^0(\PP E^* , \Upsilon_1) \khpil (\Fq)^{sr} \khpil H^0(\PP E , \Upsilon)^{*} \khpil 0 \label{CdualSAGS} \end{equation}
under the conditions stated. This motivates the following, generalizing the definition of a SAG code (see \cite{J}, $\S$2).
\begin{defn} A scroll code $C$ defined as above by evaluation of sections of $\Upsilon$ at exactly $r$ independent points of $s$ fibers is called a \textsl{strongly algebraic geometric scroll code} or \textsl{SAGS code} if $h^0( \PP E,\Upsilon(-M))=h^1( \PP E,\Upsilon)=0$. \end{defn}

The sequences (\ref{C}) and (\ref{CSAGS}) give that we obtain a generator matrix for $C$ by evaluating sections in $H^0(\PP E,\Upsilon)$ at the $v$ points. On the other hand, (\ref{Cdual}) and (\ref{CdualSAGS}) show that we get a generator matrix for (a code equivalent to) $C^*$, that is, a parity check matrix for $C$, by evaluating sections in $H^0(\PP E^*,\Upsilon_1)$ at some $v$ ``dual'' points (all of them in fibers corresponding to the same $s$ points over ${\bf P}^1$. We will say more about this in the next section.

\begin{rem} {\rm Recall that a Goppa code $C(D, G)$ is strongly algebraic geometric if $2g-2 < \deg G < s$. In analogy with this, we notice that $C$ is a SAGS code if $E$ is semistable and the following inequality holds: }
\begin{equation} r(2g-2) < deg(E) < rs. \label{SAGSineq} \end{equation}
{\rm For example, suppose $s \ge 2g$. By \cite{Na}, Remark 2.1, there exist semistable (in fact, even so-called $p$-semistable) bundles of degree zero and rank $r$ on $X$ for $X$, $r$ and $q$ ``general enough''. Twisting such a bundle by an effective divisor of degree strictly between $2g-2$ and $s$, we get an $E$ which defines a SAGS.} \end{rem}

\subsection{Another description of the codes}\label{altdesc}

Here we give another way of defining the codes $C$ and $C^*$ which will be useful for our work later with extensions.\\
\par
At each $x \in X$, a section $t$ of $O_{\PP E}(1) \to \PP E$ restricts to a linear form $t(x)$ on the projective space $\PP E|_x$; that is, a vector in $E|_x$. Evaluation of $t$ at $P = \langle e^{*} \rangle \in \PP E|_x$ is simply restriction of $t(x)$ to the line in $E^{*}|_x$ spanned by $e^*$. 
The points $P_{1,1} , \ldots , P_{s,r}$ come from covectors $e_{1,1}^{*} ,\ldots , e_{s,r}^{*} \in E^*$ which form a basis of each of the fibers of $E^*$ over the points of $D$. Thus there exist unique $e_{1,1} , \ldots , e_{s,r} \in E$ such that $e_{i,j}^{*}(e_{i^{\prime},j^{\prime}}) = \delta_{j, j^{\prime}}$, when this contraction makes sense (that is, when $x_{i^{\prime}} = x_i$). For each $(i,j)$, we have $\langle e_{i,j}^{*} \rangle^{*} = \langle e_{i,j} \rangle$, and restriction of $t(x)$ to $\langle e_{i,j}^{*} \rangle$ yields
\[ \hbox{(coefficient of $e_{i,j}$ in $t(x_{i})$)} \cdot e_{i,j} \]
which is well defined since the set of all the $e_{i,j}$ includes a basis of each of the chosen fibers. We write $\lambda_{i,j}$ for this coefficient. Identifying $\Fq^{sr}$ with $\bigoplus_{i,j} \Fq \cdot e_{i,j}$, we see that $t$ is sent to the $sr$-tuple $( \lambda_{1,1} , \ldots , \lambda_{s,r} )$. If we write this more suggestively as
\[ \left( ( \lambda_{1,1} , \ldots , \lambda_{1,r} ) , \ldots ( \lambda_{s-1,1} , \ldots , \lambda_{s,r} ) \right) \]
and consider $t$ now as a section of the vector bundle $E \to X$, then we see that the $r$-tuple $( \lambda_{(i,1} , \ldots , \lambda_{i,r} )$ is just the expression of $t(x_{i})$ in terms of our chosen basis of $E|_{x_i}$. We have natural identifications
\[ E|_{D} = \bigoplus_{i,j} \Fq \cdot e_{i,j} = \bigoplus_{i,j} O_{\PP E}(1)|_{\langle e_{i,j}^{*} \rangle} \]
allowing us to pass between the interpretations of $t$ as a section of $E \to X$ and of $\O_{\PP E}(1) \to \PP E$. Thus the sequence (\ref{CSAGS}) is identified with $0 \to H^{0} ( X , E ) \to E|_{D} \to H^{1} ( X , E(-D) ) \to 0$.\\
\par
We now set $H := E^{*}(D)$. Note that $H = \pi_{*} \left( O_{\PP E^*}(1) \otimes M \right)$. Now $E|_{D} = H^{*}(D)|_D$, which can be viewed as (the global sections of) the subsheaf of $\sPrin(H^{*})$ of principal parts supported at $D$ with at most simple poles. For each $(i,j)$, let $p_{i,j} \in \Prin(H^{*})$ be the principal part defined by $e_{i,j}$. (Of course, this is supported at $x_i$ with a simple pole.) Then we have $H^{*}(D)|_{D} = \bigoplus_{i,j} \Fq \cdot p_{i,j}$ and the sequence (\ref{CSAGS}) becomes
\[ 0 \to H^{0} ( X , H^{*}(D) ) \xrightarrow{\rho} H^{*}(D)|_{D} \xrightarrow{\nu} H^{1} ( X , H^{*} ) \to 0 \]
where $\rho$ and $\nu$ are induced by the principal part map\footnote{Since the poles are all simple, we could also think of this as the sum of the residue maps over the points of $D$.} and the coboundary map in (\ref{cohomseq}) respectively. Explicitly, $\rho$ sends a rational section of $H^{*}$ with poles bounded by $D$ to its principal part, and $\nu$ sends a principal part $\lambda_{1,1} p_{1,1} + \cdots + \lambda_{s,r} p_{s,r}$ to the cohomology class $\left[ \sum_{i,j} \lambda_{i,j} p_{i,j} \right]$.
\par
Thus the code $C$ is identified with the subspace of $H^{*}(D)|_D$ of elements occurring as principal parts of global rational sections of $H^*$, and the syndrome of an element in $\Fq^{sr}$ corresponds to the obstruction to lifting it to a global rational section of $H^*$.

\subsection{Generator and parity check matrices}\footnote{This subsection is logically independent of the rest.} In \cite{J}, generator and parity check matrices are given for the codes $C$ and $C^*$ when $r=1$, that is, $\PP E$ is the curve $X$. Here we generalize this approach to the present situation.
\par
Let $t_{1} , \ldots , t_l$ be a basis for $H^{0}(X, E)$. For each $m = 1 , \ldots , l$ and each $(i,j)$, write $\lambda_{m,(i,j)}$ for the coefficient of $e_{i,j}$ in $t_{m}(x_{i})$. Then by the last paragraph, the evaluation map sends $t_m$ to the principal part $\lambda_{m,(1,1)}p_{1,1} + \cdots + \lambda_{m,(s,r)}p_{s,r}$, so the matrix of $\rho$ with respect to the bases $\{ t_{m} \}$ and $\{ p_{i,j} \}$ is
\[ \begin{pmatrix} \lambda_{1,(1,1)} & \cdots & \lambda_{l,(1,1)} \\ \vdots & & \vdots \\ \lambda_{1,(s,r)} & \cdots & \lambda_{l,(s,r)} \end{pmatrix} =: S. \]
In order to find a matrix for $\nu$, in fact we will find one for $^{t}\nu \colon H^{1}(X, H^{*})^{*} \to \left( \Fq^{sr} \right)^*$ and dualize. We recall explicitly the Serre duality pairing
\[ H^{0}(X, \Kx \otimes H) \times H^{1}(X, H^{*}) \to H^{1}(X, \Kx) = \Fq. \]
Let $p$ be an $H^*$-valued principal part and $[ p ]$ its cohomology class; by (\ref{cohomseq}), every class in $H^{1}(X, H^{*})$ is of this form. Let $u$ be a global section of $\Kx \otimes H$. Then $u(p) \in \Prin(\Kx)$ and the contraction of $u$ and $[p]$ is simply $[ u(p) ]$. 
Hence $^{t}\delta(u)$ is the linear form given by $u \mapsto \left( p \mapsto [ u(p) ] \right)$.
\par
Now, for each $i$, let $z_i$ be a local coordinate on $X$ centered at $x_i$. We fix an isomorphism $\Fq \xrightarrow{\sim} H^{1}(X, \Kx)$ and let $c$ be the image of 1. We describe a basis of $\left( \Fq^{sr} \right)^*$ dual to the basis $p_{(1,1)} , \ldots , p_{(s,r)}$ of $\Fq^{sr}$. For each $(i,j)$, let $h_{i,j} \in H$ be such that $\langle h_{i,j} \rangle$ is the image of $\langle e_{i,j}^{*} \rangle$ under the natural isomorphism $\PP E = \PP (H^{*}(D)) \xrightarrow{\sim} \PP H^*$. We define a linear form $\overline{h_{i,j}}$ on $\Fq^{sr} = \bigoplus_{i,j} \Fq \cdot p_{i,j}$ by $p \mapsto \left[ dz_{i} \otimes h_{i,j}(p) \right]$. By construction, $\overline{h_{i,j}}(p_{i^{\prime},j^{\prime}})$ is nonzero if and only if $j = j^{\prime}$ and $i = i^{\prime}$. Multiplying the $h_{i,j}$ by nonzero scalars if necessary, we can assume that $\overline{h_{i,j}}(p_{i^{\prime},j^{\prime}}) = c \cdot \delta_{i,i^{\prime}} \delta_{j,j^{\prime}}$, so we obtain the required basis.
\par
Now let $u \in H^{0}( \Kx \otimes H)$. As we did for $E$ and $E^*$, for each $(i,j)$, let $h_{i,j}^*$ be the dual basis vector of $h_{i,j}$ in $H^*$. (Up to nonzero scalar, $h_{i,j}^{*} = e_{i,j}z_i$.) Then
\begin{equation} {^{t}\delta(u)} (p_{i,j}) = [ p_{i,j}(u) ] = \left[ \frac{u(x_{i})(h_{i,j}^{*})}{z_i} \right] = c \cdot \hbox{(coefficient of $dz_{i} \otimes h_{i,j}$ in $u(x_{i})$)} \label{hur} \end{equation}
Let us view $u$ as a section of the line bundle $\pi^{*}\Kx \otimes O_{\PP H}(1)$. To evaluate $u$ at the point $\langle h_{i,j}^{*} \rangle$, we restrict $u(x_{i}) \in (\Kx \otimes H)|_{x_i}$ to the line $\langle h_{i,j}^{*} \rangle$ in $H^{*}|_{x_{i,j}}$. This gives an element of $\Kx|_{x_i} \otimes \langle h_{i,j}^{*} \rangle^{*} = \Fq \cdot \left( dz_{i} \otimes h_{{i,j}} \right)$, and the coefficient is the same as that of $c$ in (\ref{hur}).
\par
Thus, if $u(x_{i}) = dz_{i} \otimes \left( \mu_{i,1} h_{i,1} + \cdots \mu_{i,r} h_{i,r} \right)$ for each $i$, then
\[ {^{t}\nu}(u) = \mu_{1,1} \overline{h_{1,1}} + \cdots \mu_{s,r} \overline{h_{s,r}} \]
is the expression of $^{t}\nu(u)$ with respect to the basis $\overline{h_{1,1}} , \ldots , \overline{h_{s,r}}$. 
Thus we can view $^{t}\nu(u)$ as the evaluation of $u$ at each of the points $\langle h_{i,j}^{*} \rangle \in \PP H$, expressed in terms of the $h_{i,j}$.
\par
Let now $u_{1} , \ldots , u_{l'}$ be a basis for $H^{0}(X, \Kx \otimes H)$. For each $n=1, \ldots , l'$, write $\mu_{n,(i,j)}$ for the coefficient of $dz_{i} \otimes h_{i,j}$ in $u_{n}(x_{i})$. Then the matrix of $^{t}\nu$ with respect to our chosen bases is
\[ \begin{pmatrix} \mu_{1,(1,1)} & \cdots & \mu_{l',(1,1)} \\ \vdots & & \vdots \\ \mu_{1,(s,r)} & \cdots & \mu_{l',(s,r)} \end{pmatrix} =: {^{t}R}. \]
The rows of this matrix give generators for $C^*$. But the $(ri+j)$th row represents the values of each of the $u_n$ at $\langle h_{i,j}^{*} \rangle$, so we see explicitly how $C^*$ is also an evaluation code. The matrix of $\delta$ with respect to $\{ p_{1,1} , \ldots , p_{s,r} \}$ and the basis of $H^{1}(X, H^{*})$ dual to $\{ u_{1}, \ldots , u_{l'} \}$ is $R$. By exactness, $RS = 0$ and $^{t}S {^{t}R}$ are zero, and $^{t}S$ and $R$ are parity check matrices for $C^*$ and $C$ respectively.\\
\\
\textbf{Note:} As we have defined them, $C$ and $C^*$ belong to different vector spaces. However, since we have the mutually dual bases $\{ p_{i,j} \}$ and $\{ \overline{h_{i,j}} \}$, we can view both codes as subspaces of $\Fq^{rs}$ via the vector space isomorphism $\Fq^{rs} \xrightarrow{\sim} \left( \Fq^{rs} \right)^*$ sending each $\overline{h_{i,j}}$ to $p_{i,j}$.

\begin{rem} \label{dual}
{\rm It follows from the discussion above that the orthogonal complements
(or duals) of SAGS codes are (code equivalent to) SAGS codes in general, just like for the traditional case $r=1$. Hence the description above lends itself just as well to make parity check matrices as to make generator matrices.
This is one of the virtues of Goppa codes (based on curves), which it has been hard to reproduce for codes produced from varieties of higher dimension.
If one picks all points of for example Grassmannians or  scrolls, then the coordinates of these points are suitable for producing columns of generator matrices
of codes that are interesting. But if one tries to use the same points as columns of parity check matrices, then because of the existence of linear spaces inside the varieties (lines), one cannot exceed minimum distance $3$. Hence, in order to get essentially self-dual classes of codes, like for Goppa codes, one must revise the way one picks points.}
\end{rem}

\subsection{The link with extensions}

Since the column vectors of the parity check matrix of $C$ are described through coordinates of points of $\PP E^*$ embedded by the complete linear system $\Upsilon_1$, we see that the (projectivized) syndrome space of $C$ in a natural way is identified with
\[ \PP H^0(\PP E^* , \Upsilon_1) = \PP H^0(X, K(D) \otimes E^*). \]
If $X=\PP^1$ and $\Upsilon=\O(e_1)\oplus \cdots \O(e_d)$, then this is 
\[ \PP H^0(\PP E^* , \Upsilon_1) = \PP H^0(\PP^1, \O (s-e_1-2) \oplus \O (s-e_2-2) \oplus \dots \oplus \O (s-e_d-2)) \]
The syndrome space can also be identified with
$$ H^1(X,E \otimes M^*)=H^1(X,H^*),$$ where as before $H^{*} = E(-D)$. For $X = \PP^1$ and $\Upsilon =\O(e_1)\oplus \cdots \O(e_d)$, this is
$$ H^1(\PP^1, \O (e_1-s)) \oplus \O (e_2-s)) \oplus \dots \oplus \O (e_d-s))=H^1(X,H^*),$$
where $H=\O (s-e_1) \oplus \O (s-e_2) \oplus \dots \oplus \O (s-e_d)$.
\par
Now $H^1({X},H^*)= Ext^1(\O_{{X}},H^*)$ can be identified with isomorphism classes of extensions 
     $$ 0 \khpil H^* \khpil W \khpil \O_{{X}} \khpil 0.$$ 
In the next section, we will relate the geometry of the space $\PP H^1({X},H^*)^*$ to the behaviour of these extensions.

\section{Geometry of extension spaces} \label{geomext}

Henceforth, to allow for slightly greater generality, instead of the bundle $H^*$ we will work with $\Hom(F_{2}, F_{1})$ for bundles $F_1$ and $F_2$ over $X$. Recall that the \textit{decomposable locus} of $\Hom(F_{2}, F_{1})$ is the locus of maps of rank one. This is a determinantal subvariety of $\Hom(F_{2}, F_{1})$, defined by the vanishing of all $2 \times 2$ minors of the maps. We denote $\Delta$ the locus defined by these (homogeneous) polynomials in $\PP \Hom(F_{2}, F_{1})^*$.

\begin{exa} {\rm If $F_1$ and $F_2$ are both of rank two then $\Delta$ is a bundle of smooth quadrics in the $\PP^3$-bundle $\PP \Hom(F_{2}, F_{1})^{*} \to X$. Of course, if either one is a line bundle then $\Delta = \PP \Hom(F_{2}, F_{1})^*$.} \end{exa}  

\subsection{Embeddings of scrolls}

Here we give another description of the map from $\PP H$ into the projectivized syndrome space $\PP H^{1}(X, H^{*})^*$, which will be adapted for our study of extensions.
\par
Let $V \to X$ be any vector bundle. We have a short exact sequence
\[ 0 \to \V \to \V(x) \to \frac{\V(x)}{\V} \to 0 \]
whose cohomology sequence includes
\[ \cdots \to H^{0}(X, V(x)) \to V(x)|_{x} \to H^{1}(X, V) \to \cdots \]
Since $\PP ( V^{*}(-x) )|_x$ is canonically isomorphic to $\PP V^{*}|_x$, the projectivized coboundary map gives rise to a map $\psi_{x} \colon \PP V^{*}|_{x} \dashrightarrow \PP H^{1}(X, V)^*$. We define a map $\psi \colon \PP V^{*} \dashrightarrow \PP H^{1}(X, V)^*$ by taking the product of all the $\psi_x$.
\par
Now by Serre duality and the projection formula, we have an identification
\begin{equation} H^{1}(X, V) \xrightarrow{\sim} H^{0}(\PP V^{*} , \pi^{*}\Kx \otimes O_{\PP V^*}(1))^{*}. \label{Serreproj} \end{equation}
\begin{lemma} (\cite{Hi}, $\S$2) Via the above identification, $\psi$ coincides with the standard map $\PP V^{*} \dashrightarrow |\pi^{*}\Kx \otimes O_{\PP V^*}(1)|$. Moreover, $\psi$ is an embedding if and only if for all $x, y \in X$, we have $h^{0}(X, \Kx(-x-y) \otimes V^{*}) = h^{0}(X, \Kx \otimes V^{*}) - 2r$. \label{psiprops} \end{lemma}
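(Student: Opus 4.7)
The plan is to prove the two assertions separately.

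For the identification of $\psi$ with the standard map, I would check they agree pointwise. A point $\langle v\rangle \in \PP V^*|_x$ corresponds to a line $\Fq \cdot v \subset V|_x$, and $\psi_x$ sends it to the cohomology class of the $V$-valued principal part $v \otimes z^{-1}$ at $x$, with $z$ a local parameter. Via the Serre duality identification $(\ref{Serreproj})$, the line in $H^{1}(X,V)$ through this class corresponds to the hyperplane in $H^{0}(X, \Kx \otimes V^*)$ of sections pairing trivially with it. A residue computation identifies this pairing with the contraction of $V^*$ and $V$: writing $u(x) \in \Kx|_x \otimes V^*|_x$ as $dz|_x \otimes v^*$, the pairing is $\langle v^*, v\rangle$. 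Hence the hyperplane is $\{u : u(x) \in \Kx|_x \otimes \mathrm{Ann}(v)\}$. On the other hand, since the fibre of $O_{\PP V^*}(1)$ at $\langle v\rangle$ is naturally $V^*|_x/\mathrm{Ann}(v)$, the standard map $\PP V^* \dashrightarrow |\pi^*\Kx \otimes O_{\PP V^*}(1)|$ sends $\langle v\rangle$ to precisely the same hyperplane.

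For the embedding criterion, the plan is to invoke the very-ampleness test: $\psi$ is an embedding iff $L := \pi^*\Kx \otimes O_{\PP V^*}(1)$ is very ample, iff $H^{0}(L) \to H^{0}(L|_Z)$ surjects for every length-two subscheme $Z \subset \PP V^*$. I would then stratify such $Z$ by the length of $\pi(Z) \subset X$. If $\pi(Z) = x+y$ has length two (including $x = y$ as a double point), the fibrewise description from the first part translates surjectivity to that of the full evaluation $H^{0}(X, \Kx \otimes V^*) \to (\Kx \otimes V^*) \otimes \O_{x+y}$, equivalently $h^{0}(X, \Kx(-x-y) \otimes V^*) = h^{0}(X, \Kx \otimes V^*) - 2r$. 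If instead $\pi(Z) = \{x\}$ has length one, then $Z \subset \PP V^*|_x \iso \PP^{r-1}$, and sections of $L$ restrict to the complete linear system of $O_{\PP^{r-1}}(1)$ scaled by the line $\Kx|_x$; separation of $Z$ is therefore automatic provided $H^{0}(L) \to (\Kx \otimes V^*)|_x$ surjects. This last surjection $h^{0}(X, \Kx(-x) \otimes V^*) = h^{0}(X, \Kx \otimes V^*) - r$ follows from the $y = x$ instance of the hypothesis by comparing the exact sequences $0 \to \Kx(-x) \otimes V^* \to \Kx \otimes V^* \to (\Kx \otimes V^*)|_x \to 0$ and $0 \to \Kx(-2x) \otimes V^* \to \Kx(-x) \otimes V^* \to (\Kx(-x) \otimes V^*)|_x \to 0$.

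The main technical step I expect is the ``only if'' direction of the first case: very ampleness of $L$ a priori yields surjectivity only after post-composing with contraction by the specific lines $v_1 \in V|_x$, $v_2 \in V|_y$ picked by the chosen points of $\PP V^*$, whereas the condition on $X$ demands surjectivity of the larger map $H^{0}(X, \Kx \otimes V^*) \to (\Kx \otimes V^*)|_x \oplus (\Kx \otimes V^*)|_y$. The plan is to argue by contradiction: any nonzero linear form on this target vanishing on the image of $H^0$ would, via its two components in $V|_x$ and $V|_y$, single out concrete $v_1, v_2$ whose corresponding points of $\PP V^*$ the bundle $L$ fails to separate, contradicting the embedding hypothesis. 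The converse direction is direct, since further contracting a surjective evaluation map by any $v_1, v_2$ preserves surjectivity.
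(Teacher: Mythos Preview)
The paper does not actually prove this lemma: it is stated with the attribution ``(\cite{Hi}, $\S$2)'' and no argument is supplied in the text. There is therefore no in-paper proof to compare your proposal against.

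That said, your approach is the natural one and is essentially correct. The pointwise identification of $\psi$ with the map to $|\pi^{*}\Kx \otimes O_{\PP V^*}(1)|$ via the residue pairing is right given the paper's conventions (points of $\PP V^*|_x$ are lines in $V|_x$, and $O_{\PP V^*}(1)$ has fibre $V^{*}|_x/\mathrm{Ann}(v)$ there). Reducing the embedding criterion to the length-two subscheme test, stratified by whether $Z$ lies in a fibre or not, is the standard argument and works as you describe.

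One point to tighten in your ``only if'' direction for the horizontal case: you split a linear form on the target into ``two components in $V|_x$ and $V|_y$''. This is fine for $x \neq y$, but when $x = y$ the target $(\Kx \otimes V^{*})|_{2x}$ is a nontrivial extension of $(\Kx \otimes V^{*})|_x$ by itself rather than a direct sum, so the form does not decompose that way. The repair is routine: filter by order of vanishing. If the form factors through the quotient $(\Kx \otimes V^{*})|_x$ you get a base point of $L$; otherwise its restriction to the sub $(\Kx(-x) \otimes V^{*})|_x$ is nonzero and picks out a point $p \in \PP V^{*}|_x$, and the full form then specifies a horizontal tangent direction at $p$ along which $L$ fails to separate. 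With this adjustment your sketch goes through.
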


\begin{rem} {\rm The key feature of this interpretation is that $\psi$ sends $\langle v \rangle \in \PP V^{*}|_x$ to the projectivized cohomology class of a $V$-valued principal part supported at $x$ with a simple pole in the direction $v$. This will allow us to use the alternative construction of the code $C$ in $\S$\ref{altdesc} to understand the geometry of the syndromes.} \end{rem}

We recall a definition:

\begin{defn} Let $F_{2} \to X$ be a vector bundle. Then an \textsl{elementary transformation of $F_2$} is a vector bundle defined by a locally free subsheaf of $\F_2$ of rank equal to the rank of $F_2$. \end{defn}

Such subsheaves can be defined using principal parts. If $F_1$ is another vector bundle over $X$, then any $\Hom(F_{2}, F_{1})$-valued principal part naturally defines a map $\F_{2} \to \sPrin(F_{1})$. Then the kernel of such a map defines an elementary transformation of $F_2$. Moreover, any elementary transformation of $F_2$ is of this form (although not in a unique way).\\
\par
We will need the following technical result on extension classes:
\begin{lemma} (\cite{Hi}, $\S$4.1) Let $W$ be an extension of $F_2$ by $F_1$. An elementary transformation of ${\mathcal G}$ of ${\mathcal F}_2$ lifts to a vector subbundle of $W$ if and only if the class $\delta(W)$ of the extension can be defined (cf.\ (\ref{cohomseq})) by a principal part $p \in \Prin(\Hom(F_{2}, F_{1}))$ such that ${\mathcal G} = \ker \left( p \colon {\mathcal F}_{2} \to \sPrin(F_{1}) \right)$. \label{tech} \end{lemma}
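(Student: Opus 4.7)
The strategy is to make explicit the connecting map in (\ref{cohomseq}) applied to $\Hom(F_{2}, F_{1})$, which identifies each extension class with the image of some principal part. Right-exactness of this sequence guarantees that $\delta(W) = [p]$ for some $p \in \Prin(\Hom(F_{2}, F_{1}))$; the content of the lemma is to pin down which such representatives are compatible with a given elementary transformation via the induced map $p \colon \F_{2} \to \sPrin(F_{1})$.

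For the implication $(\Leftarrow)$, suppose $p$ represents $\delta(W)$ and set $\G := \ker(p \colon \F_{2} \to \sPrin(F_{1}))$. I would build an explicit model of the extension from $p$: let $\widetilde{\W} \subset \sRat(F_{1}) \oplus \F_{2}$ be the subsheaf of pairs $(a, s)$ with $\overline{a} = p(s)$ in $\sPrin(F_{1})$. Projection onto $\F_{2}$ is surjective because $\sRat(F_{1}) \to \sPrin(F_{1})$ is surjective on local sections, and its kernel is $\F_{1} \oplus 0$, so we obtain an extension $0 \to \F_{1} \to \widetilde{\W} \to \F_{2} \to 0$. A \v{C}ech cocycle computation, using local lifts of $p$ to rational sections of $\Hom(F_{2}, F_{1})$, identifies its class with $[p] = \delta(W)$, so $\widetilde{\W} \cong W$ as extensions. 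The rule $s \mapsto (0, s)$ then lifts $\G$ into $\widetilde{\W}$: for $s \in \G$ one has $p(s) = 0 = \overline{0}$, so $(0, s)$ lies in $\widetilde{\W}$, and this produces a subbundle compatible with $\G \hookrightarrow \F_{2}$.

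For $(\Rightarrow)$, take a lift $\G \hookrightarrow W$ of $\G \hookrightarrow \F_{2}$. At the generic point of $X$, the inclusion $\G \hookrightarrow \F_{2}$ is an isomorphism, so composing its inverse with $\G \hookrightarrow W$ produces a rational splitting $\sigma$ of the projection $W \to \F_{2}$. On any sufficiently small open $U$ admitting a regular splitting $\sigma_{0} \colon \F_{2}|_{U} \to W|_{U}$, the difference $\sigma - \sigma_{0}$ is a rational section of $\Hom(F_{2}, F_{1})$ over $U$; its principal part is a local section of $\sPrin(\Hom(F_{2}, F_{1}))$. These local principal parts glue to a global $p \in \Prin(\Hom(F_{2}, F_{1}))$, since on overlaps two regular local splittings differ by a regular homomorphism and hence produce the same principal part. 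A \v{C}ech computation then identifies $[p]$ with $\delta(W)$. To check $\ker(p) = \G$, observe that $p(s) = 0$ over $U$ iff $(\sigma - \sigma_{0})(s)$ is a regular section of $F_{1}$, iff $\sigma(s)$ is a regular section of $W$ over $U$; since $\sigma(s)$ lies generically in $\G$ and $\G$ is saturated in $W$ (being a subbundle), this happens iff $\sigma(s) \in \G|_{U}$, equivalently $s \in \G|_{U}$.

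The main obstacle will be the final step of $(\Rightarrow)$, where the subbundle (as opposed to merely subsheaf) hypothesis on $\G \hookrightarrow W$ must be used to conclude that a generic section of $\G$ which is regular in $W$ already lies in $\G$; this is where the distinction between a subsheaf and its saturation becomes essential. The \v{C}ech computations identifying $[p]$ with $\delta(W)$ in each direction are standard, but they need to be performed with care to confirm that the two constructions are genuinely inverse to each other.
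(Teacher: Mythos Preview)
The paper does not actually prove this lemma; it is quoted from \cite{Hi}, $\S$4.1, and used as a black box in the proof of Theorem~\ref{geomlift}. So there is no ``paper's own proof'' to compare against, and your proposal should be judged on its own merits.

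Your argument is sound in both directions and follows the natural line one would expect. The explicit model $\widetilde{\W} \subset \sRat(F_{1}) \oplus \F_{2}$ is the standard incarnation of the extension attached to a principal part, and the \v{C}ech identification of its class with $[p]$ is routine. For $(\Rightarrow)$, producing the rational splitting $\sigma$ from the generic inverse of $\G \hookrightarrow \F_{2}$ and reading off $p$ as the principal part of $\sigma - \sigma_{0}$ is exactly right, and you have correctly isolated the one place where the \emph{subbundle} (i.e.\ saturated) hypothesis is needed: without it, a regular section of $W$ lying generically in $\G$ need not lie in $\G$, and the equality $\ker(p) = \G$ would fail.

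One small point you pass over in $(\Leftarrow)$: you assert that $s \mapsto (0,s)$ realizes $\G$ as a \emph{subbundle} of $\widetilde{\W}$, not merely a subsheaf. This deserves a line of justification, since the lemma explicitly demands a subbundle. It suffices to check that $\widetilde{\W}/\G$ is torsion-free: if $f \cdot (a,s) \in \G$ for a nonzero local function $f$, then $fa = 0$ forces $a = 0$, whence $p(s) = \overline{0} = 0$ and already $(a,s) = (0,s) \in \G$. On a curve, torsion-free implies locally free, so $\G$ is indeed a subbundle. With this added, the proof is complete.
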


Now we can give the main result of this section.

\begin{thm} Let $0 \to F_{1} \to W \to F_{2} \to 0$ be a nontrivial extension. Then $\langle \delta(W) \rangle$ belongs to the linear span of at most $h$ independent points of $\Delta|_D$ if and only if $W$ has a subbundle lifting from an elementary transformation of $F_2$ of the form
\begin{equation} 0 \to {\mathcal G} \to {\mathcal F}_{2} \to \tau \to 0 \label{set} \end{equation}
where $\tau \subset \sPrin(F_{1})$ is a skyscraper sheaf of length at most $h$ supported on $D$ and with at most simple poles. \label{geomlift} \end{thm}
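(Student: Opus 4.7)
The plan is to build a dictionary between rank-one principal parts on $D$ with at most simple poles and points of $\Delta|_D \subset \PP H^{1}(X, \Hom(F_{2}, F_{1}))^*$, and then invoke Lemma~\ref{tech} to translate the subbundle condition on $W$ into a statement about principal parts representing $\delta(W)$.

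First I would unpack the geometry of the embedding $\psi$ for $V = \Hom(F_{2}, F_{1})$: a point $\langle\phi\rangle \in \Delta|_{x}$ is the class of a rank-one map $\phi \colon F_{2}|_{x} \to F_{1}|_{x}$, and by the remark after Lemma~\ref{psiprops}, $\psi$ sends $\langle\phi\rangle$ to the projectivized class of the $V$-valued principal part at $x$ with simple pole in direction $\phi$; denote this principal part $\phi/z_{x}$, where $z_x$ is a local coordinate at $x$. A local check shows that the resulting map $\phi/z_{x} \colon \F_{2} \to \sPrin(F_{1})$ has image the skyscraper $\im(\phi) \subset F_{1}|_{x}$ of length $\rank(\phi) = 1$; this is the ``$h = 1$'' case of the theorem.

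For the forward direction, assume $\langle\delta(W)\rangle$ lies in the span of linearly independent points $\psi(\langle\phi_{1}\rangle), \ldots, \psi(\langle\phi_{k}\rangle)$ with $k \leq h$, $\langle\phi_{i}\rangle \in \Delta|_{x_{i}}$ and $x_{i} \in \Supp(D)$. Then $\delta(W) = [p]$ for $p := \sum_{i=1}^{k} c_{i}\, \phi_{i}/z_{x_{i}}$, which is supported on $D$ with at most simple poles. Lemma~\ref{tech} gives a subbundle of $W$ lifting from $\G := \ker(p \colon \F_{2} \to \sPrin(F_{1}))$, with $\tau := \F_{2}/\G = \im(p) \subset \sPrin(F_{1})$. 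At each $x \in \Supp(D)$ the stalk of $\tau$ is spanned by those $c_{i}\phi_{i}$ with $x_{i} = x$, each of rank one, so $\length_{x}(\tau) \leq \#\{i : x_{i} = x\}$; summing gives $\length(\tau) \leq k \leq h$.

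Conversely, given such a subbundle of $W$, Lemma~\ref{tech} furnishes a principal part $p$ with $\delta(W) = [p]$ and $\ker(p) = \G$. Because $\tau = \im(p)$ is a skyscraper on $D$ with at most simple poles, $p$ itself is supported on $D$ with at most simple poles, so at each $x \in \Supp(D)$ we may write $p = \phi_{x}/z_{x}$ for some $\phi_{x} \in \Hom(F_{2}|_{x}, F_{1}|_{x})$ with $\rank(\phi_{x}) = \length_{x}(\tau)$. Decomposing each $\phi_{x}$ into a sum of $\rank(\phi_{x})$ rank-one maps (an elementary fact from linear algebra) expresses $p$ as the sum of $\sum_{x} \rank(\phi_{x}) = \length(\tau) \leq h$ rank-one principal parts at points of $D$. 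Each summand contributes a point of $\Delta|_{D}$ via $\psi$, so $\langle\delta(W)\rangle$ lies in the span of these points; replacing them by a maximal linearly independent subset finishes the proof.

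The only step I expect to need some care is the identification $\length_{x}(\tau) = \rank(\phi_{x})$, which is a stalk-level check that $\im(\phi_{x}/z_{x} \colon \F_{2,x} \to \sPrin(F_{1})_{x})$ coincides with the skyscraper $\im(\phi_{x} \colon F_{2}|_{x} \to F_{1}|_{x})$. Everything else is bookkeeping with $\psi$ and Lemma~\ref{tech}, so there is no real obstacle beyond getting the indices and signs straight.
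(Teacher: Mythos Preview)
Your proposal is correct and follows essentially the same route as the paper's own proof: both directions hinge on the identification, via the alternative description of $\psi$, of points of $\Delta|_{D}$ with cohomology classes of rank-one $\Hom(F_{2},F_{1})$-valued principal parts with simple poles on $D$, and then invoke Lemma~\ref{tech} to pass between principal-part representatives of $\delta(W)$ and elementary transformations of $\F_{2}$ lifting to subbundles of $W$. Your treatment is slightly more explicit about the stalkwise identity $\length_{x}(\tau)=\rank(\phi_{x})$ and about extracting an independent subset at the end, but the argument is otherwise the same as the paper's.
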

\begin{proof} Suppose $\langle \delta(W) \rangle$ belongs to the linear span of at most $h$ independent points of $\Delta|_D$ in $\PP H^{1}(X, H^{*})^*$. Then by the alternative definition of $\psi$ given above, $\delta(W)$ can be defined by $p \in \Prin(\Hom(F_{2} , F_{2})$ of the form $\sum_{j=1}^{h}p_j$ where each $p_j$ is a principal part supported at one point of $D$ with a simple pole along some rank one map. Thus we have a short exact sequence
\[ 0 \to \G \to \F_{2} \xrightarrow{p} \tau \to 0 \]
where $\tau \subset \sPrin(F_{1})$ is a skyscraper sheaf of length at most $h$ supported on $D$ and with at worst simple poles. By Lemma \ref{tech}, the sheaf $\G$ lifts to a subbundle of $W$.\\
\par
Conversely, suppose an elementary transformation $\G$ of $\F_2$ of the stated type lifts to a subbundle of $W$. By Lemma \ref{tech}, the class $\delta(W)$ can be defined by some $p \in \Prin(\Hom(F_{2}, F_{1}))$ which, viewed as a map $\F_{2} \to \sPrin(F_{1})$, has kernel $\G$. From the sequence (\ref{set}) we deduce that $p$ is supported along $D$ and has at most simple poles. We write $p = \sum_{x \in D}p_x$, where each $p_x$ is a principal part supported at one point $x$, and then write each $p_x$ as a sum of rank one homomorphisms, of minimal length. Since $\tau$ is of length at most $h$, any such expression for $p$ contains at most $h$ independent such rank one homomorphisms. By the alternative definition of $\psi$, the point $\langle \delta(W) \rangle$ is contained in the span of these at most $h$ rank one points of $\Hom(F_{2}, F_{1})$. \end{proof}

\begin{rem} {\rm Suppose $F_{1} = H^*$ and $F_{2} = \Ox$, so we are in the situation of the last section. Then this theorem shows that the extension $0 \to H^* \to W \to \Ox \to 0$ can be ``quasi-inverted'' to a short exact sequence
\[ 0 \to \Ox(-A) \to W \to (H^{*})^{\prime} \to 0 \]
for some effective divisor $A \leq D$ and some bundle $(H^{*})^{\prime}$, if and only if $\langle \delta(W) \rangle$ lies in the linear span of some points of $\PP H^*$ all lying over the support of the divisor $A \leq D$. In this case the rank of each $\phi_x$ can be at most 1.} \end{rem}

\begin{rem} {\rm When $\Fq$ is replaced with the complex number field, a generalization of Theorem \ref{geomlift} is proven in \cite{Hi}, Theorem 4. For example, $p$ may have poles of higher order. However, the above proof suffices for the situation we are considering.} \end{rem}

Now we give some alternative ways of viewing the condition of Theorem \ref{geomlift}. Firstly, it is equivalent to saying that $\delta(W)$ belongs to
\begin{multline*} \ker (H^1(X,\Hom(\O_{X},H^*)) \khpil H^1(X,\Hom(\O_{X}(-A),H^*))) \\
= \ker(H^1(X,H^*) \khpil H^1(X,H^*(A)) \\
= \ker(H^0(X,H(K))^* \khpil H^0(X,H(K-A))^*). \end{multline*}
We look instead at the (isomorphic) space of extensions of type
      $$ 0 \khpil \O_X  \khpil W \khpil H \khpil 0.$$
Then it follows from a dual version, as in \cite{NR}, Lemma 3.2, that there is a 
surjection $W \khpil \O_X(A) \khpil 0$ if and only if $\delta(W)$ 
belongs to
\begin{multline*} \ker (H^1(X,\Hom(H,\O_X) \khpil H^1(X, \Hom(H,\O_X(A))) \\
= \ker(H^1(X,H^*) \khpil H^1(X,H^*(A)). \end{multline*}
We see that the two kernels are the same, and we have two alternative descriptions.
\par
In the first description we may view  $\O_X(-A)$ as a special case
of a locally free sheaf $G$ with a sheaf injection $\phi: G \khpil \O_X,$ 
such that $\phi$ factors via a map $G \khpil W.$
\par
In the second description  $\O_X(A)$ is a special case of 
a locally free sheaf $G$ with a sheaf homomorphism 
$\phi: \O_X \khpil G,$ such that $\phi$ 
extends to a homomorphism $W \khpil G$. 

The common kernel can also be viewed as that of a map
     \[Ext(\O_X, H^*) \khpil Ext(\O_X, H^*(A)),\]
using Proposition 6.3 of \cite{H}. Using Proposition 6.7 of \cite{H},
we interpret this as the kernel of a map
     \[Ext(H,\O_X) \khpil Ext(H(-A),\O_X)=Ext(H,\O_X(A)).\]
\begin{example} \label{dir}
{\rm An easy case to handle is when $X= \PP^1$ and we pick all $r$ points in each fiber along the directrix curves. Assume $s=q+1$. It is clear that the natural subscroll of type $(e_2, \dots, e_r)$ is contained in a hyperplane, and that any hyperplane containing this subscroll intersects the first directrix in $e_1$ points, and for some such hyperplane they can taken to be rational. One easily sees that this hyperplane contains $e_1+(r-1)(q+1)$ points, which is largest possible, and that the minimum distance of the code then is $q+1-e_1$. Working dually, with the scroll $T_1$ of type $(q-1-e_r, \dots q-1-e_1)$, we see that we get a linear dependency between $q+1-e_1$ points on the directrix curve of smallest degree $(q-1-e_1)$, which again indicates minimum distance $q+1-e_1$. Furthermore the higher weights $d_i$ increase by one until we reach a value of $i$ such that no codimension $i$-space contains the subscroll of $(e_2, \dots, e_r)$. This space contains $e_1+e_2+(r-2)(q+1)$ points, so $d_i=2q+2-e_1-e_2$. We leave it to the reader to make the remaining calculations to the determine the complete weight hierarchy. It is a sad fact that a code with such a nice description has such bad code-theoretical properties.}
\end{example}

\section{Error correction}
\label{geomcorr}

We return to the code $C$. Here we give a geometric condition for the correctability of the error, in terms of the image of the embedding of $\PP \Hom(F_{2}, F_{1})^*$ in the syndrome space.\\
\\
\textbf{Important hypothesis:} We will assume that the $\Hom(F_{2}, F_{1})$-valued principal parts $p_{1,1}, \ldots , p_{s,r}$ are all along directions corresponding to rank one homomorphisms. This is possible since each fiber is spanned by such maps.

\subsection{A geometric condition for correctability}

The following is analogous to \cite{J}, Theorem 3.4.

\begin{thm} Suppose a codeword $\x \in C$ is transmitted, and $\y = \x + \e$ is received. Let $W$ be the extension of $F_2$ by $F_1$ defined by the syndrome class $\nu(\y) = \nu(\e)$. Then the error $\e$ has weight at most $h$ only if $W$ has a subbundle, necessarily of degree at least $\deg(F_{2}) - h$, lifting from an elementary transformation of $F$ of the form
\[ 0 \to {\mathcal G} \to {\mathcal F}_{2} \to \tau \to 0 \]
where $\tau \subset \sPrin(F_{1})$ is a skyscraper sheaf of length at most $h$ supported on $D$ and with at most simple poles. \label{corrgeom} \end{thm}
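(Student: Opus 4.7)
My plan is to deduce this directly from Theorem \ref{geomlift}; the work has been mostly set up by the alternative description in Section \ref{altdesc} and by the construction of the embedding $\psi$ preceding Lemma \ref{psiprops}. First I would unpack the weight hypothesis. Using the identification of $C$ as a subspace of $H^{*}(D)|_{D}$ with basis $\{p_{i,j}\}$, the received vector $\y$ and the error $\e$ are linear combinations $\sum \lambda_{i,j} p_{i,j}$, and the assumption $\operatorname{wt}(\e)\le h$ says that at most $h$ of the coefficients $\lambda_{i,j}$ occurring in $\e$ are nonzero. Since $\nu(\y)=\nu(\e)$ and the syndrome class equals $\delta(W)$ by definition, $\delta(W)$ is the cohomology class of a principal part built from at most $h$ of the $p_{i,j}$.

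Next I would invoke the description of $\psi$ from Section \ref{geomext}: each basis vector $p_{i,j}$ maps to the projectivized cohomology class of a $\Hom(F_{2},F_{1})$-valued principal part supported at $x_{i}$ with a simple pole in a fixed direction. By the \textbf{Important hypothesis}, these fixed directions are rank-one homomorphisms, so each $\langle p_{i,j}\rangle$ is a point of $\Delta|_{D}$. Consequently $\langle \delta(W)\rangle$ lies in the linear span of at most $h$ independent points of $\Delta|_{D}$, which is exactly the geometric hypothesis of Theorem \ref{geomlift}.

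Applying Theorem \ref{geomlift} then yields the required elementary transformation
\[ 0 \to \G \to \F_{2} \to \tau \to 0 \]
with $\tau\subset\sPrin(F_{1})$ a skyscraper sheaf of length at most $h$ supported on $D$ with at worst simple poles, together with a lift of $\G$ to a subbundle of $W$. Since this lift has the same degree as $\G$ and $\deg\G = \deg F_{2} - \length(\tau)\ge \deg F_{2}-h$, the degree bound follows.

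There is no real obstacle here; the only point one has to check carefully is that the principal parts $p_{i,j}$ used to describe the code and its syndrome map in Section \ref{altdesc} are literally the same as the rank-one principal parts realising the embedding $\psi$ into $\PP H^{1}(X,\Hom(F_{2},F_{1}))^{*}$. This is automatic from the construction of $\psi$ via the coboundary of the sequence $0\to\V\to\V(x)\to\V(x)/\V\to 0$, so the reduction to Theorem \ref{geomlift} is immediate.
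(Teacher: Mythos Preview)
Your argument is correct and close in spirit to the paper's, but the routing differs slightly. The paper does not pass through Theorem \ref{geomlift}: having written $\e = \sum_{k=1}^{h} \lambda_{i_k,j_k} p_{i_k,j_k}$, it applies Lemma \ref{tech} directly to the principal part $\e$ itself, observing that $\G := \ker\bigl(\e \colon \F_{2} \to \sPrin(F_{1})\bigr)$ lifts to a subbundle of $W$, and that the Important hypothesis forces $\tau = \F_{2}/\G$ to be of length at most $h$ with simple poles along $D$. Your version instead reinterprets the same data geometrically---each $\langle p_{i,j}\rangle$ is a point of $\Delta|_D$, so $\langle \delta(W)\rangle$ lies on an $h$-secant to $\Delta|_D$---and then invokes Theorem \ref{geomlift}, which in turn rests on Lemma \ref{tech}. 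So you take one extra step through the secant-variety packaging; the paper's proof is marginally more direct, while yours makes the link with the geometric picture (spelled out in the paper only in the subsequent Remark) explicit from the start. Either way the content is the same.
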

\begin{proof} Suppose $\e$ has weight $h$. Then $\e$ is a principal part of the form
\[ \lambda_{i_{1}, j_{1}} p_{i_{1}, j_{1}} + \cdots + \lambda_{i_{h},j_{h}} p_{i_{h}, j_{h}} \]
for some nonzero $\lambda_{i_{1},j_{1}}, \ldots , \lambda_{i_{h},j_{h}} \in \Fq$, with the $(i_{h}, j_{h})$ all distinct. Then by Lemma \ref{tech}, the kernel of the map of $\strs$-modules $\e \colon \F_{2} \to \sPrin(F_{1})$ lifts to a subbundle of $W$. Since $\e$ is supported along $D$ and is a sum of $h$ rank one elements of $\Hom(F_{2}, F_{1})$ with at most simple poles along $D$, this subbundle is an elementary transformation of the stated type. \end{proof}

\begin{rem} {\rm As in Theorem 3.4 of \cite{J}, we can give a geometric interpretation of this situation. By Theorem \ref{geomlift}, there are at most $h$ errors in $\y$ only if the class $\langle \nu(\e) \rangle$ belongs to an $h$-secant plane to $\Delta|_D$ spanned by at most $h$ distinct points. Moreover, also as in \cite{J}, both the errors and the points of the scroll $\PP H^*$ 
are defined over $\Fq$.} \end{rem}

\subsection{Error location}

Assume we have a code $C$ as described above, and that we have picked exactly $r$ points in each of $s$ fibers, where $s$ typically is the number of all $\Fq$-rational points on $X$. Assume a codeword is sent, and the syndrome calculated. If this is zero, there is no problem. Otherwise, look at the corresponding point in the projectivized syndrome space $\PP Ext^1(\O_{{X}},H^*)^* = \PP Ext^1(H,\O_{X})^*$. This is the space where the scroll $\PP H \cong \PP E^{*} \cong T_1$ is embedded.
\par
Now we think of error location in two steps. In Step 1 we find the various fibers of $T_1$ such that the syndrome is a linear combination of points of these fibers. In Step 2 we find the individual points in these fibers such that the syndrome contribution from each given fiber is a linear combination of syndromes from these individual points. It is only  in Step 1 that we can use the direct analogue with the situation studied in \cite{BC}, \cite{Co1}, \cite{Co2} and \cite{J}. On the other hand, Step 2 is basically only a linear algebra problem: the syndrome component from this fiber is a point of this fiber. Find which linear combination it is, of the $r$ (dual) points that we have picked in this fiber in the first place.
\par
Hence we focus on Step 1. View the syndrome as an extension
   $$ 0 \khpil \O_X \khpil W \khpil H \khpil 0.$$ 
Error location is then to find the (hopefully) unique line bundle $\O_X(A)$ of lowest degree such that there is a surjection of $W$ onto $\O_X(A)$. Having found the divisor class, one must find the effective divisor $A'$ in the class such that the syndrome is spanned by the fibers of $T_1$ corresponding to points on the divisor $A'$. This part of the process is described in Chapter 4 of \cite{BC} for $r=1$.

\begin{defn}
For a rank $r$ bundle $V$ on a curve $X$ we set
$s_1(V)= \deg V - r\max \{\deg L\},$ for $L$ a line subbundle of 
$V$ on $X$.
\end{defn}
Then we have:
\begin{prop} \label{main}
For a given syndrome point $\nu(\y)$, interpreted as an extension of type
$$ 0 \khpil H^* \khpil W \khpil \O_X \khpil 0,$$ we have 
$$s_1(W) \le (r+1)a-rs+\deg \E,$$
where $a$ is the number of different
fibers of $T$ (over $X$) we must use to pick points such that errors in the 
positions corresponding to these points give rise to the syndrome.
\end{prop}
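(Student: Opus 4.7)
The plan is to translate the hypothesis into a bound on line subbundles of $W$ by invoking Theorem \ref{geomlift} in the special case $F_{1} = H^{*}$, $F_{2} = \O_{X}$, and then to convert this into a bound on $s_{1}(W)$ by a direct degree count. First I would observe that in this special case $\Hom(F_{2}, F_{1}) = H^{*}$ itself, and every nonzero homomorphism $\O_{X} \to H^{*}$ is automatically of rank one, so the decomposable locus $\Delta \subset \PP \Hom(F_{2}, F_{1})^{*}$ coincides with the full bundle $\PP H$ over $X$. In particular the ``important hypothesis'' preceding Theorem \ref{corrgeom} is automatically satisfied for every principal part at our disposal.

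Next I would identify the contribution of the $a$ fibers. The syndrome $\nu(\y)$ is obtained from the error principal part, which is a sum of terms $\lambda_{i,j} p_{i,j}$ supported at the $a$ distinct points $x_{i_{1}}, \ldots , x_{i_{a}} \in D$ over which the relevant fibers of $T$ lie. Grouping the contributions at each $x_{i_{k}}$ yields a representative $p \in \Prin(H^{*})$ of $\delta(W)$ supported at these $a$ points with simple poles; the corresponding class $\langle \delta(W) \rangle$ thus lies in the span of $a$ points of $\Delta|_{D}$. Applying Theorem \ref{geomlift} with $h = a$, we obtain an elementary transformation
\[ 0 \to \G \to \strs \to \tau \to 0 \]
with $\tau$ a skyscraper sheaf of length at most $a$ supported on $D$ with at most simple poles, such that $\G$ lifts to a subbundle of $W$. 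Since $\G$ is a subsheaf of $\strs$ of the same rank, $\G \iso \strs(-A')$ for some effective divisor $A'$ of degree at most $a$, and so $W$ admits a line subbundle of degree at least $-a$.

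Finally I would assemble the numerology. The extension has rank $r+1$ and $\deg W = \deg H^{*} = \deg \E - rs$, since $H = E^{*}(D)$ and $D$ has degree $s$. By definition
\[ s_{1}(W) = \deg W - (r+1)\max\{\deg L : L \hookrightarrow W \text{ a line subbundle}\}, \]
and the line subbundle $\strs(-A') \hookrightarrow W$ constructed above contributes $\max \geq -a$, hence
\[ s_{1}(W) \le \deg W + (r+1) a = (r+1)a - rs + \deg \E, \]
as required. The only step that needs any care is the first, namely recording that the error principal part representing $\nu(\y)$ really is supported at exactly the $a$ underlying points of $X$ with simple poles along rank-one directions, so that the $h$ of Theorem \ref{geomlift} may be taken equal to $a$; everything else is then a bookkeeping of degrees and ranks.
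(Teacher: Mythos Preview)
Your proof is correct and follows essentially the same route as the paper's. Both arguments observe that an error supported on $a$ fibers gives a principal part representing $\delta(W)$ which is supported at the $a$ underlying points of $X$ with simple poles, whence $\O_{X}(-A)$ (with $A$ the divisor of those points) lifts to a line subbundle of $W$; the bound on $s_{1}(W)$ then drops out of the degree count $\deg W = \deg H^{*} = \deg \E - rs$. The only difference is presentational: you route the existence of the subbundle explicitly through Theorem~\ref{geomlift} (noting that $\Delta = \PP H$ since $F_{2}$ has rank one), while the paper simply asserts that $\O_{X}(-A)$ is a rank-one subsheaf of $\W$, relying on the reader to supply this from Lemma~\ref{tech} or the remark following Theorem~\ref{geomlift}.
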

\begin{proof}
If we use points from $a$ different fibers to span the syndrome, where these
fibers correspond to the points $x_{i_1}, \cdots x_{i_a}$ on the curve, then
$\mathcal A^*$ is a subsheaf of rank one of $\mathcal W$, and 
$$s_1(W) \le \deg W + \deg A= \deg H^*+(r+1)a=\deg \E -rs +(r+1)a.$$
\end{proof}
\begin{rem}
{\rm For $r=1$ and Goppa codes this gives $s(W) \le 2a -\deg (D-G)=2a-d$,
where $d$ is the designed minimum distance, and we see that if the number of errors is less than designed-correctable, then $W$ is unstable. 
}

\end{rem}
\par
What does it take to ensure that the fibers spanning a point can be uniquely chosen? Assume there are errors in two fibers. If the syndrome point $\langle \nu( \e ) \rangle$ is also in the span of two other fibers, then the span of the first two fibers has a common point with the span of the second group of two fibers, and the span of all four fibers is less than the ``expected'' value which is $4r-1$. Hence a sufficient condition for this not to happen is
\[ h^0( T_1 , \Upsilon_{1}(- F_{1} - F_{2} - F_{3} - F_{4})) = h^0( X, E_1( - x_{1} - x_{2} - x_{3} - x_{4})) = h^0(X, E_1) - 4r \]
for all choices of $x_1, x_2, x_3, x_4$.
\par
In general, syndromes from $a$ fibers can be uniquely traced back to $a$ fibers if
\[ h^0(X, E_1(-B))=h^0(X, E_1)-2ar \]
for all choices of effective divisors $B$ of degree $2a$ (compare with Lemma \ref{psiprops}).
\par
For $X=\PP^1$ this happens if $s-e_1-2-2a \ge -1$. For curves of higher genus we have:
\begin{prop}\label{stab}
Let $E$ be a stable bundle of rank $r$ on $X$. Then errors in
\[ a<\frac{\mu (H)}{2}= \frac{s-\mu(\E)}{2} = \frac{rs-\deg \E}{2r} \]
different fibers can be traced back to a unique choice of $a$ fibers.
\end{prop}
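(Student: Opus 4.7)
The plan is to verify the sufficient criterion displayed in the paragraph just before Proposition \ref{stab}, namely that $h^0(X, E_1(-B)) = h^0(X, E_1) - 2ar$ for every effective divisor $B$ of degree $2a$, where $E_1 = \Kx \otimes H$. From the short exact sequence
\[ 0 \to E_1(-B) \to E_1 \to E_1|_B \to 0, \]
the quotient $E_1|_B$ has length $r \cdot \deg B = 2ar$, so the required equality is equivalent to surjectivity of the restriction map $H^0(E_1) \to H^0(E_1|_B)$, which in turn is equivalent to injectivity of the connecting map $\delta \colon H^1(E_1(-B)) \to H^1(E_1)$ in the long exact cohomology sequence.

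Next I would invoke Serre duality. Since $E_1^{*} \otimes \Kx = (\Kx \otimes H)^{*} \otimes \Kx = H^{*}$, and similarly $E_1(-B)^{*} \otimes \Kx = H^{*}(B)$, the map $\delta$ is Serre-dual to the map $H^0(X, H^*) \to H^0(X, H^*(B))$ induced by the natural sheaf inclusion $H^{*} \hookrightarrow H^{*}(B)$. Injectivity of $\delta$ thus becomes surjectivity of this map on global sections. Since a sheaf inclusion always induces an injection on $H^0$, surjectivity is equivalent to the equality $h^0(H^*) = h^0(H^*(B))$, and it suffices to show that both sides vanish.

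Finally I would apply stability. Since $H = E^{*}(D)$, we have $H^{*}(B) = E(-D+B)$, and tensoring with a line bundle preserves stability, so $E(-D+B)$ is stable. A stable bundle of non-positive slope can have no global sections: a nonzero section would yield an $\strs$-subsheaf, whose saturation is a line subbundle of degree $\geq 0$, violating the stability inequality. Now compute
\[ \mu(E(-D+B)) = \mu(E) - s + 2a = -(\mu(H) - 2a), \]
which is strictly negative under the hypothesis $a < \mu(H)/2$. Hence $H^0(H^*(B)) = 0$, and a fortiori $H^0(H^*) = 0$, so both sides vanish and the required equality holds for every effective $B$ of degree $2a$.

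The only genuine obstacle is the bookkeeping in the Serre duality step: one must check that the dual of $\delta$ is really the natural inclusion $H^0(H^*) \hookrightarrow H^0(H^*(B))$, which follows from functoriality of Serre duality applied to the sheaf inclusion $E_1(-B) \hookrightarrow E_1$. Once this identification is in place, the stability hypothesis does all the work through the standard vanishing for stable bundles of negative slope, and the criterion preceding the proposition delivers the conclusion.
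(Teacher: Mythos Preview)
Your proof is correct and follows essentially the same route as the paper's: both reduce the criterion $h^0(X,E_1(-B)) = h^0(X,E_1) - 2ar$ to the vanishing $h^0(X,H^*(B)) = 0$, and then deduce this vanishing from stability of $H^*$ and the slope computation $\mu(H^*(B)) = 2a - \mu(H) < 0$. The only cosmetic difference is that the paper reaches the vanishing statement via Riemann--Roch applied to $E_1(-B)$ (with Serre duality giving the correction term $h^0(\Kx(B)\otimes E_1^*) = h^0(H^*(B))$), whereas you go through the long exact sequence of $0 \to E_1(-B) \to E_1 \to E_1|_B \to 0$ and dualize the connecting map; these are equivalent bookkeeping devices for the same computation.
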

\begin{proof}
By Riemann--Roch, we have
\[ h^0(X, E_{1}(-B)) = \deg E_1 - 2ra + r(1-g) + h^0(\Kx(B) \otimes E_{1}^{*}). \]
We show that $h^0(X, \Kx(B) \otimes E_{1}^{*}) = h^0(X, H^{*}(B))=0$. Since $E$ is stable, so is $H^*$, and one obtains $h^0(X, H^{*}(B))=0$ unless $2a \ge \mu(H)$. (By the same argument, $H^0(X, E_1)=\deg E_1 +r(1-g)$, so we obtain the desired conclusion.) \end{proof}
\begin{rem}
{\rm Since one can correct errors from $a$ fibers if $a \le \frac{\mu (H)-1}{2},$
it is tempting to conclude that one can correct up to $t=r \left( \frac{\mu (H)-1}{2} \right) = \frac{\deg H-r}{2}$ errors (which holds for $r=1$, where $\deg H$ is the designed minimum distance), since there are $r$ points in each fiber. But the discussions so far  only makes this true for $t$ errors if they are clustered in as few as $\frac{\mu (H)-1}{2},$ fibers. If they are ``spread out'' on more fibers, the discussion above does not ensure unique decoding of more than $\frac{\mu(H)-1}{2}$ errors.}
\end{rem}

\begin{prop} Suppose $E$ is a stable bundle on $X$. Then the syndrome of an error which can be traced uniquely back to a choice of $a$ fibers in the sense of the previous result, that is, where the number $a$ of fibers where errors are made is at most
$\frac{\mu (H)}{2}$, defines an extension $0 \to H^{*} \to W \to \O_X \to 0$ with
$$s_1(W) < \frac{(r-1)(\deg \E - sr)}{2r}.$$
\end{prop}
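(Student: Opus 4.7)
The plan is to reduce the statement to a direct calculation combining Proposition \ref{main} with the numerical hypothesis $a < \mu(H)/2$ supplied by Proposition \ref{stab}. From Proposition \ref{main} the syndrome-extension $0 \to H^{*} \to W \to \O_X \to 0$ satisfies
\[ s_1(W) \;\le\; (r+1)a - rs + \deg \E. \]
The first step is to rewrite the right-hand side in terms of the slope $\mu(H)$. Since $H = E^{*}(D)$ has rank $r$ and degree $rs - \deg \E$, we have $\mu(H) = s - \mu(\E) = (rs - \deg \E)/r$, equivalently $\deg \E - rs = -r\mu(H)$. The bound becomes
\[ s_1(W) \;\le\; (r+1)a - r\mu(H). \]

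The second step is to feed in the hypothesis that the error has been traced uniquely back to $a$ fibers in the sense of Proposition \ref{stab}, which (using the stability of $E$) gives the \emph{strict} inequality $a < \mu(H)/2$. Multiplying by $r+1 > 0$ and substituting,
\[ s_1(W) \;\le\; (r+1)a - r\mu(H) \;<\; \frac{(r+1)\mu(H)}{2} - r\mu(H) \;=\; -\frac{(r-1)\mu(H)}{2}. \]
Finally, substituting $\mu(H) = (rs - \deg \E)/r$ back in,
\[ -\frac{(r-1)\mu(H)}{2} \;=\; \frac{(r-1)(\deg \E - sr)}{2r}, \]
which is the stated bound.

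There is no genuine obstacle: the proof is a short algebraic manipulation that rests entirely on the two earlier propositions. The one point to be careful about is that the strict inequality in the conclusion really does require the strict inequality $a < \mu(H)/2$ (not merely $a \le \mu(H)/2$), since $(r+1)a - r\mu(H)$ attains exactly $-(r-1)\mu(H)/2$ when $a = \mu(H)/2$; this strictness is precisely what the stability hypothesis on $E$ provides via Proposition \ref{stab}, so the proposition is consistent as stated.
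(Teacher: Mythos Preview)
Your proof is correct and follows the same approach as the paper: the paper's proof is the single line ``Insert $a=\frac{\mu(H)}{2}=\frac{rs-\deg\E}{2r}$ in the statement of Proposition \ref{main}.'' You have simply spelled out the algebra, and in fact been more careful than the paper about why the resulting inequality is strict (namely because Proposition \ref{stab} gives $a<\mu(H)/2$ strictly).
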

\begin{proof}
Insert $a=\frac{\mu (H)}{2}=\frac{rs- \deg \E}{2r}$ in the statement of
Proposition \ref{main}.
\end{proof}
We see that the right hand side is strictly negative if $r>1$ and $C$ is a SAGS code.
\begin{rem} \label{perf}
{\rm What can be said about the code parameters of the SAGS codes?
Certainly the word length is $sr$. If enough fibers are chosen ($s$ big enough) that the chosen fibers span the projective space $\mathbb{P}^{k-1}$ in which $T$ lies, then the dimension of the code is $k$. It is much harder to find the true minimum distance of the codes. All we can say is that it depends on the choice of the points in each fiber. The case studied in Example \ref{dir} obviously 
represents bad choices. To illustrate the problem of choosing points conveniently, look at the simplest case of a code which is not a Goppa code.
We choose $X=\PP^1$, and $L=\O(1) \oplus \O(1)$. Hence $T$ is a quadric
in  $X=\PP^3$. How can we choose $2$ points on each line on one of the 
two families, such that as few as possible among the $2q+2$ points are 
contained in the same plane. The worst case involves $q+2$ points in a plane
(take two lines $L_1$ and $L_2$ on the quadric, meeting in a point $P$), and choose all $q+1$ points on $L_2$, one additional point on $L_1$, and q additional points on lines parallel to $L_1$).
But there clearly exist better choices, unless $q$ is very small.
In general, the minimum distance guaranteed by the method, is $s-2-e_1$
for codes from rational curves (Example \ref{dir}) and $\mu (H)$ for codes from curves of higher genus if $E$ is stable (Proposition \ref{stab}).
But one can hope for much better true values with good choices of points. }
\end{rem}
\begin{rem}
{\rm The considerations above involve no direct decoding algorithm. Neither did \cite{J}. Nevertheless the principles from \cite{J} were used to approach concrete decoding in the series of papers \cite{BC}, \cite{Co1}, \cite{Co2}. We feel that the generalization presented in the present paper of the line of thoughts in \cite{J}, should lend itself to a corresponding generalization of the results of these other papers.}
\end{rem}

\section{Syndrome decoding of other codes}
\label{syn2}
Throughout this work we have insisted on picking exactly $r$ points in each fiber when defining the codes. This is because we have defined the codes as evaluation codes, thus starting with a natural generator, and not a parity check matrix. To obtain the exact sequence
\[ 0 \khpil H^0( \PP E, \Upsilon ) \khpil (\Fq)^{sr} \khpil H^0( \PP E^{*}, \Upsilon_1)^{*} \khpil 0\]
and its dual counterpart
\[ 0 \khpil H^0( \PP E^{*} , \Upsilon_1) \khpil (\Fq)^sr \khpil H^0( \PP E , \Upsilon)^{*} \khpil 0,\]
we had to choose $r$ points in each fiber, spanning it. As in $\S$\ref{matrix}, the last sequence defines a parity check matrix via evaluation of the dual points in each fiber.
\par
An easier approach for our purposes would of course have been to start with $T_1$ and its complete linear system $\Upsilon_1$ in the first place, and define a code $C$ by a parity check matrix obtained from evaluation of sections of $\Upsilon_1$ in some more arbitrarily chosen points on the fibers of $T_1$. As an extreme case we could have picked all $\Fq$-rational points of all fibers. Everything said above about Step 1 of the last section would then have been unaltered, but in Step 2 we would be far from having unique decoding, unless we knew for some reason that at most a single error could be made in each individual fiber. (As mentioned in Remark \ref{dual}, the minimum distance would be $3$ in this extreme case).
\par
Nevertheless, if one defines codes from scrolls via parity check matrices instead of via generator matrices (as evaluation codes), then one obtains a larger class of codes, for which one can interpret decoding as described above via vector bundle manipulations.

\vspace{.5cm}

\end{document}